\tikzset{
    bt/.style={draw=blue,thick},
    ns/.style={circle,draw=blue,fill=blue, inner sep=0pt, minimum size=2mm},
    string/.style={draw=#1, postaction={decorate}, decoration={markings,mark=at position .45 with {\arrow[blue]{triangle 60}}}},
    doublestring/.style={draw=#1, postaction={decorate}, decoration={markings, mark=at position .7 with {\arrow[blue]{triangle 60}}, 
    mark=at position .3 with {\arrowreversed[blue]{triangle 60}}}},
    costring/.style={draw=#1, postaction={decorate}, decoration={markings,mark=at position .55 with {\arrow[draw=#1]{<}}}},
    arr/.style={string=blue, thick},
    doublearr/.style={doublestring=blue, thick},
    lin/.style={blue},
    dlin/.style = {blue, dashed, thick},
    dot/.style={circle,draw=#1,fill=#1,inner sep=1pt},
}
\numberwithin{equation}{section}
\theoremstyle{definition}
\newtheorem{Thm}{Theorem}[section]
\newtheorem{Lemma}{Lemma}[section]
\newtheorem{Q}{Question}[section]
\newcommand{\sh}[1]{\mathcal{#1}}
\DeclareMathOperator{\SL}{SL}
\DeclareMathOperator{\C}{\mathbb{C}}
\DeclareMathOperator{\Z}{\mathbb{Z}}
\DeclareMathOperator{\diff}{d}
\DeclareMathOperator{\Lie}{Lie}
\DeclareMathOperator{\Proj}{\mathbb{P}}
\DeclareMathOperator{\Hom}{Hom}
\DeclareMathOperator{\length}{\ell}
\DeclareMathOperator{\Tangent}{T}
\DeclareMathOperator{\Set}{{\bf Set}}
\DeclareMathOperator{\Gm}{\mathbb{G}_m}
\DeclareMathOperator{\Ga}{\mathbb{G}_a}
\DeclareMathOperator{\Spec}{Spec}
\DeclareMathOperator{\Ho}{H}
\DeclareMathOperator{\ev}{ev}
\DeclareMathOperator{\Aff}{\mathbb{A}}
\DeclareMathOperator{\rot}{rot}
\DeclareMathOperator{\repart}{re}
\DeclareMathOperator{\Loop}{L}
\DeclareMathOperator{\fpqc}{fpqc}
\DeclareMathOperator{\etale}{\acute{e}t}
\DeclareMathOperator{\Alg}{{\bf Alg}}
\DeclareMathOperator{\Gp}{{\bf Gp}}
\DeclareMathOperator{\KM}{KM}
\DeclareMathOperator{\central}{cent}
\title{Projective lines in the affine flag manifold with given tangent root vector}
\begin{document}
\author{Claude Eicher}
\date{\today}
\address{Department of Mathematics, Massachusetts Institute of Technology, Cambridge, MA 02139 United States}
\email{eicher@mit.edu}

\maketitle

\begin{abstract}
We first describe the tangent space to the affine flag manifold associated to a simple algebraic group over $\C$
at the distinguished point starting from standard definitions. We then construct projective lines in the affine
flag manifold tangent to given root vectors associated to imaginary roots of the corresponding affine Kac-Moody algebra
and describe in which Schubert varieties they lie. 
\end{abstract}

\tableofcontents

\section{Notation and Conventions}
An algebra over a field is understood to be commutative and unital. 
We denote by $\Alg/\C$, $\Set$, and $\Gp$ the category of $\C$-algebras, sets, and groups
respectively. $\Aff^d$ denotes the affine space of dimension $d$, $\Gm$ and $\Ga$ denotes the multiplicative and additive group. 

\section*{Acknowledgements}
The author would like to thank G. Felder for discussions on this topic and is supported by the
Swiss National Science Foundation. 

\section{Affine Kac-Moody Lie algebras}
Here we introduce some basic notation concerning
affine Kac-Moody Lie algebras. When $\mathfrak{g}$ is a simple $\C$-Lie algebra, we
denote by $\mathfrak{g}_{\KM}$ the corresponding untwisted affine Kac-Moody Lie algebra. 
The set of roots of $\mathfrak{g}_{\KM}$ is denoted by $\Phi_{\KM}$ and the subset of positive, negative, and real
roots by $\Phi^+_{\KM}$, $\Phi^-_{\KM}$, and $\Phi^{\repart}_{\KM}$ respectively. 
The \emph{indecomposable imaginary root} in $\Phi^+_{\KM}$ is denoted by $\delta$. 
We have triangular decompositions $\mathfrak{g} = \mathfrak{n}^- \oplus \mathfrak{h} \oplus \mathfrak{n}$
and $\mathfrak{g}_{\KM}Ê= \mathfrak{n}^-_{\KM}Ê\oplus \mathfrak{h}_{\KM}Ê\oplus \mathfrak{n}_{\KM}$.
Here, e.g. $\mathfrak{n}^-_{\KM}$ is the direct sum of the negative
root spaces $\mathfrak{n}^-_{\KM} = \bigoplus_{\alpha \in \Phi^-_{\KM}}\mathfrak{g}_{\KM \alpha}$. 
The reflections $s_{\alpha}$, $\alpha \in \Phi^{\repart +}_{\KM}$, generate the Weyl group $W$
of $\mathfrak{g}_{\KM}$. It is the semidirect product of the finite
Weyl group $W_{\mathfrak{g}}$ and the coroot lattice $\check{Q}$ of $\mathfrak{g}$, $W = W_{\mathfrak{g}}\ltimes  \check{Q}$. 
We will denote the Bruhat order on $W$ by $\leq$  and the length function
by $\length: W \rightarrow \Z_{\geq 0}$.  

\section{Affine flag manifold and Schubert varieties}
In \S  3.1-3.3 we recall the construction of the affine flag manifold and Schubert varieties following \cite{PR08}. 

\subsection{Affine flag manifold $X$}
Let $G$ be a simple linear algebraic group over $\C$
and $B \supseteq T$ a choice of a Borel and maximal torus subgroup.
Below we are going to assume the reader's familiarity with the basic concepts
of ind-schemes. An exposition of these can be found e.g. in \cite{Lev13}[Appendix A].
In contrast to this reference we are, however, going to consider an ind-scheme as a (covariant) functor
$\Alg/\C \rightarrow \Set$. 

The \emph{affine flag manifold} is
defined as in \cite{PR08}[equation (1.6)] as the quotient of fpqc-sheaves $\Loop G/ \Loop^+ I$,
where the Iwahori group $I$ is considered as a smooth affine group scheme over $\Spec \C[[t]]$. 
Below \emph{$X$ will always denote the affine flag manifold}. We recall
that $\Loop^+ I$ is (represented by) an affine group scheme over $\C$ not of finite type \cite{PR08}[section 1]
that is the preimage of $B$ under the natural morphism $\Loop^+ G \rightarrow G$. Here $\Loop^+ G$ 
denotes the similar construction where we consider the group scheme over $\Spec \C[[t]]$ induced by $G$.  Also,
$\Loop G$ is a group ind-affine ind-scheme over $\C$. 
We have a quotient morphism $\pi: \Loop G \rightarrow X$. 

The basic theorem about $X$ is
\begin{Thm} \cite{PR08}[Theorem 1.4]
$X$ is a strict ind-scheme of ind-finite type over $\C$. 
\end{Thm}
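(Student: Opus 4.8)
The plan is to realize $X$ as a filtered colimit of its Schubert varieties and to verify the two defining properties---strictness (the transition maps are closed immersions) and ind-finite type (each term is a finite-type $\C$-scheme)---after first reducing to a classical group, where an explicit lattice-chain moduli description makes everything transparent.

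\textbf{Reduction to $\GL_n$.} I would choose a faithful representation $G \hookrightarrow \GL_n$ carrying $B$ into the upper-triangular matrices and $T$ into the diagonal torus. This induces compatible morphisms $\Loop G \to \Loop \GL_n$ and $\Loop^+ I \to \Loop^+ I_{\GL_n}$, hence a morphism $X \to X_{\GL_n}$ of fpqc quotients, which one checks to be a closed immersion. Since a closed subfunctor of a strict ind-scheme of ind-finite type is again of that form (closedness propagates through the defining filtration), it suffices to treat $\GL_n$.

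\textbf{Moduli description and the finite-type filtration.} For $\GL_n$ I would identify $X_{\GL_n}$ with the functor of complete periodic lattice chains: an $R$-point is a chain $(\mathcal{L}_i)_{i \in \Z}$ of $R[[t]]$-lattices in $R((t))^n$ with $\mathcal{L}_i \subset \mathcal{L}_{i+1}$, $\mathcal{L}_{i+n} = t^{-1}\mathcal{L}_i$, and each quotient $\mathcal{L}_{i+1}/\mathcal{L}_i$ locally free of rank one over $R$; this comes from the Iwahori being the stabilizer of the standard chain. Filtering by the chains squeezed between $t^{N}\mathcal{O}^n$ and $t^{-N}\mathcal{O}^n$ gives subfunctors $X^{(N)}$, each representable by a closed subscheme of a finite product of ordinary Grassmannians of the finite free module $t^{-N}\mathcal{O}^n / t^{N}\mathcal{O}^n$, hence by a projective $\C$-scheme of finite type. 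The inclusions $X^{(N)} \hookrightarrow X^{(N+1)}$ are closed immersions and $X_{\GL_n} = \varinjlim_N X^{(N)}$, which is precisely a strict ind-scheme of ind-finite type; transporting back along the closed immersion of the reduction step yields the claim for $X$, with the intrinsic filtration given by the Schubert varieties $X_w = \overline{IwI/I}$, $w \in W$.

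\textbf{The main obstacle.} The genuinely hard step is establishing that the naive fpqc quotient $\Loop G / \Loop^+ I$ is representable and agrees with the lattice-chain functor above. This rests on showing that $\pi : \Loop G \to X$ is a torsor under $\Loop^+ I$ for a topology fine enough to apply fpqc descent, so that lattice-chain data glue; this is exactly the technical heart of \cite{PR08}[Theorem 1.4]. Once representability and the moduli interpretation are in hand, the remaining steps are either formal (colimits along closed immersions) or reduce to the classical representability of finite Grassmannians.
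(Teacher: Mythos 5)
The paper gives no proof of this statement: it is imported verbatim from \cite{PR08}[Theorem 1.4], so there is no internal argument to compare against. Your outline is, in substance, the standard proof found in that reference and in \cite{BD00}: reduce to $\GL_n$ via a faithful representation, identify $X_{\GL_n}$ with the moduli functor of complete periodic lattice chains, and exhaust it by the bounded chains, which are closed subschemes of products of finite Grassmannians of $t^{-N}\C[[t]]^n/t^{N}\C[[t]]^n$. Two points in your reduction step deserve to be made explicit rather than left as something one ``checks''. First, the map $X \rightarrow X_{\GL_n}$ being a \emph{closed} immersion is not formal: it requires the quotient $\GL_n/G$ to be affine (true here since $G$ is reductive, by Matsushima's criterion) together with the lemma in the proof of Theorem 4.5.1 of \cite{BD00} --- exactly the input this paper itself invokes later to embed $X^{(\SL_2)}$ into $X$; without affineness of the quotient one only gets a locally closed immersion, which still yields ind-finite type but requires rephrasing the strictness argument. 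Second, one must check that the Iwahori of $G$ is recovered as $\Loop G \cap \Loop^+ I_{\GL_n}$, i.e.\ that the chosen representation matches up the Borel subgroups, so that the induced map of quotients is a monomorphism in the first place. You correctly isolate the remaining technical heart, namely that the fpqc quotient $\Loop \GL_n/\Loop^+ I$ agrees with the lattice-chain functor; the missing ingredient there is that $\Loop\GL_n$ surjects onto lattice chains Zariski-locally on the base (a chain with locally free rank-one subquotients can be trivialized locally), so the lattice-chain functor, already an fpqc sheaf as an ind-projective scheme, is the sheaf quotient. With those points supplied, your sketch is a faithful reconstruction of the cited proof rather than a genuinely different route.
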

The unit $1 \in G(\C((t)))$ defines the \emph{distinguished point} of $X$.
We will also denote it by $1 \in X(\C)$.

\subsection{Weyl group $W$}\label{sec:Weylgroup}
\emph{For the remaining article we assume $G$ to be simply connected}. 
Let $N$ be the normalizer of $T$ in $G$. We have an isomorphism of groups
\begin{align*}
\check{Q} = \Hom(\Gm,T) \xrightarrow{\cong} T(\C((t)))/T(\C[[t]])\;,\; \gamma \mapsto \gamma(t)\;. 
\end{align*}
It extends to an isomorphism of the short exact sequences of groups
\begin{align*}
1 \rightarrow \check{Q} \rightarrow W \rightarrow W_{\mathfrak{g}} \rightarrow 1
\end{align*}
and
\begin{align*}
1 \rightarrow T(\C((t)))/T(\C[[t]]) \rightarrow N(\C((t)))/T(\C[[t]]) \rightarrow N(\C((t)))/T(\C((t))) \rightarrow 1\;.
\end{align*}
We define for $w \in W$ the point $\dot{w} \in X(\C)$
by any preimage of $w$ under the map $N(\C((t))) \rightarrow N(\C((t)))/T(\C[[t]])$. We recover the
distinguished point of $X$ as $\dot{1}$. 

\subsection{Schubert varieties $\overline{X_w}$}
We define the \emph{(affine) Schubert cell} $X_w$
and \emph{(affine) Schubert variety} $\overline{X_w}$ as in \cite{PR08}[Definition 8.3]
(denoted by $C_w$ and $S_w$ in loc. cit.). 
Then $X_w \cong \Aff^{\length(w)}$. $\overline{X_w}$ is a closed subscheme of $X$
and an irreducible projective variety carrying an action of $\Loop^+ I$.
We have closed embeddings $\overline{X_w} \hookrightarrow \overline{X_v}$ when
$w \leq v$. 

\begin{Thm} \cite{PR08}[Proposition 9.9]
$X = \varinjlim_{w \in W} \overline{X_w}$
\end{Thm}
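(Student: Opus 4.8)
The plan is to exhibit the directed system $(\overline{X_w})_{w \in W}$ of Schubert varieties as a cofinal subsystem of a defining system underlying the strict ind-scheme structure on $X$, and to deduce that the two presentations have the same colimit. Throughout I use the affine Bruhat decomposition $X = \bigsqcup_{w \in W} X_w$ into Schubert cells together with the closure relation $\overline{X_w} = \bigsqcup_{v \leq w} X_v$, so that each $\overline{X_w}$ is a projective variety containing only finitely many cells, and I use the closed embeddings $\overline{X_w} \hookrightarrow \overline{X_v}$ for $w \leq v$ recorded above.

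First I would check that $(\overline{X_w})_{w \in W}$ is a genuine directed system. For this the only combinatorial input needed is that the Bruhat order on the affine Weyl group $W$ is directed upward: any finite set $w_1, \dots, w_k \in W$ admits a common upper bound $u$, a standard property of Coxeter groups whose intervals $[e,w]$ are finite. Granting this, the $\overline{X_w}$ form a directed family of finite-type closed subschemes of $X$ along closed immersions, their colimit $\varinjlim_w \overline{X_w}$ is a strict ind-scheme of ind-finite type, and the closed embeddings $\overline{X_w} \hookrightarrow X$ assemble into a canonical morphism $\varphi \colon \varinjlim_w \overline{X_w} \to X$.

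The heart of the argument is showing $\varphi$ is an isomorphism, which I would do by a mutual cofinality comparison with a defining system $X = \varinjlim_n X_n$ arising from the theorem recalled above that $X$ is a strict ind-scheme of ind-finite type, the $X_n$ being finite-type closed subschemes with closed transition maps. In one direction, each $\overline{X_w}$ is of finite type, so the closed immersion $\overline{X_w} \hookrightarrow X$ factors through some $X_n$ (a morphism from a quasi-compact scheme into an ind-scheme factors through a member of any defining system), and the induced map $\overline{X_w} \hookrightarrow X_n$ is again a closed immersion. In the other direction, each $X_n$ is quasi-compact and hence meets only finitely many cells $X_{w_1}, \dots, X_{w_k}$; choosing an upper bound $u$ for $w_1, \dots, w_k$ yields $X_n \subseteq \overline{X_u}$ on underlying sets via the closure relation, and this upgrades to a closed immersion $X_n \hookrightarrow \overline{X_u}$. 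Because every $X_n$ lies in some $\overline{X_u}$ and every $\overline{X_w}$ lies in some $X_n$, the two systems are cofinal and $\varphi$ is an isomorphism.

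The step I expect to be the main obstacle is precisely the last upgrade: passing from the set-theoretic containment $X_n \subseteq \overline{X_u}$ to a closed immersion of schemes, i.e. controlling the scheme structure rather than merely the $\C$-points. This is where one must know that $X$ is ind-reduced, so that the defining system may be taken to consist of reduced subschemes and a closed subscheme is pinned down by its support; the Schubert varieties are themselves reduced, which keeps the comparison consistent. A lesser technical point is the verification of directedness of the affine Bruhat order, which, while standard, must be invoked to supply the common upper bounds $u$ used in both halves of the cofinality argument.
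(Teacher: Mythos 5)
This statement is quoted from \cite{PR08}[Proposition 9.9] and the paper contains no proof of it, so there is nothing internal to compare your argument against; I will assess it on its own terms. Your overall skeleton --- a mutual cofinality comparison between the system of Schubert varieties and a defining system $(X_n)$ for the strict ind-scheme $X$, using directedness of the Bruhat order and the closure relation $\overline{X_w}=\bigsqcup_{v\le w}X_v$ --- is indeed the standard route and matches what Pappas--Rapoport do. But as written there are two gaps, one of which is the actual crux of the theorem.

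First, the serious one: you correctly observe that upgrading $X_n\subseteq\overline{X_u}$ from a containment of supports to a closed immersion of schemes requires knowing that the defining system can be taken reduced, i.e.\ that $X$ is ind-reduced --- but you then simply assume this. That is not a ``lesser input'': it is the content of \cite{PR08}[Theorem 6.1] (reducedness of $\Loop G$ for $G$ semisimple and simply connected, proved there by a genuinely nontrivial argument), it is exactly where the standing hypothesis that $G$ be simply connected enters, and without it the statement is \emph{false} as an equality of ind-schemes (for non-simply-connected $G$ the fpqc quotient is non-reduced and the union of the reduced Schubert varieties is only the underlying reduced ind-scheme). A proof that black-boxes this step has omitted the hard part. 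Second, a smaller but real logical slip: ``$X_n$ is quasi-compact and hence meets only finitely many cells'' is a non sequitur --- a quasi-compact space can perfectly well be an increasing union of proper closed subsets, so quasi-compactness alone does not place $X_n$ inside some $\overline{X_u}$. What saves you is that $X_n$ is of finite type over $\C$, hence a Noetherian topological space: among the closed subsets $X_n\cap\overline{X_w}$ there is a maximal one, and if it were proper you could enlarge it using directedness of the Bruhat order, a contradiction. (Alternatively, one works with an explicit defining system by lattice chains, each member of which visibly meets only finitely many Iwahori orbits.) With the Noetherian maximality argument substituted and the reducedness input either proved or explicitly cited as \cite{PR08}[Theorem 6.1], your argument is complete.
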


\subsection{Kac-Moody group $G_{\KM}$}
In fact, $X$ can be constructed in terms of
the affine Kac-Moody group. We recall, cf. \cite{BD00}[section 7.15.1], that the affine Kac-Moody group
associated to $G$ can be described as $G_{\KM} = \Gm^{\rot} \ltimes \widehat{\Loop G}$, where the group of
loop rotations $\Gm^{\rot}=\Gm$ acts by scaling $t$. Here $\widehat{\Loop G}$ is 
a certain central extension of $\Loop G$ by $\Gm^{\central} = \Gm$
\begin{align*}
1 \rightarrow \Gm^{\central} \rightarrow \widehat{\Loop G} \xrightarrow{p} \Loop G \rightarrow 1 
\end{align*}
that splits canonically over $\Loop^+ G$. 
We define $\widehat{\Loop^+ I}=p^{-1}(\Loop^+ I)$ and 
$I_{\KM} = \Gm^{\rot} \ltimes \widehat{\Loop^+ I}$. Also we introduce 
$T_{\KM}$ as the quotient of $I_{\KM}$ by its pro-unipotent radical. 
Then $T_{\KM} \cong \Gm^{\rot} \times \Gm^{\central} \times T$ is an algebraic torus over $\C$. We have
a canonical identification $X
= G_{\KM}/I_{\KM}$ of fpqc-sheaves. The $X_w$ and $\overline{X_w}$
are $I_{\KM}$-invariant. It is known that the unique $T_{\KM}$-fixed point in $X_w$ is $\dot{w}$. 

\subsection{Points of $X$ with values in a strictly Henselian local ring}
\begin{Lemma}\label{Lemma:pointsofXthinwithvaluesinashring}
Let $R$ be a $\C$-algebra and a strictly Henselian local ring. The
map\\ $G(R((t)))/I(R[[t]]) \rightarrow X(R)$ defined by sheafification is a bijection. 
\end{Lemma}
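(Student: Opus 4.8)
The plan is to read the map as the comparison between the presheaf quotient and its fpqc-sheafification. By definition $X$ is the fpqc-sheafification of the presheaf $R \mapsto \Loop G(R)/\Loop^+ I(R) = G(R((t)))/I(R[[t]])$, and the asserted map is the canonical one from the presheaf value into $X(R)$. So I must show it is injective (separatedness) and surjective (existence of a global lift), and I would observe at the outset that injectivity holds for \emph{every} $R$ while only surjectivity uses the hypothesis on $R$. For injectivity, suppose $g_1, g_2 \in G(R((t)))$ have equal image in $X(R)$; then fpqc-locally they differ by $\Loop^+ I$, i.e.\ there is an fpqc cover $R \to R'$ with $h := g_1^{-1} g_2 \in I(R'[[t]])$. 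Since $h$ is already defined over $R$ and $\Loop^+ I$ is a subsheaf of $\Loop G$ cut out by the two fpqc-local conditions ``$h \in G(R[[t]])$'' (both $\Loop^+ G \hookrightarrow \Loop G$ and $B \hookrightarrow G$ being monomorphisms of sheaves) and ``$h|_{t=0} \in B(R)$'', $h$ lies in $I(R[[t]])$; hence $g_1, g_2$ already represent the same presheaf class.

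For surjectivity I fix $x \in X(R)$ and consider the subsheaf $P \subseteq \Loop G$ of lifts of $x$, namely $P(S) = \{\, g \in \Loop G(S) : \pi(g) = x|_S \,\}$. The free action of $\Loop^+ I$ makes $P$ an fpqc $\Loop^+ I$-torsor (it is fpqc-locally nonempty, by definition of sheafification), and $x$ lifts to $\Loop G(R)$ precisely when $P(R) \neq \emptyset$. I would trivialize $P$ through the congruence filtration of the pro-algebraic group $\Loop^+ I = \varprojlim_n \Loop^+_n I$, where $\Loop^+_n I$ is the $n$-jet group, $\Loop^+_n I(S) = I(S[t]/t^{n+1})$. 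Each $\Loop^+_n I$ is a smooth connected affine group scheme of finite type over $\C$ (jets of the smooth affine $\C[[t]]$-group $I$), and the pushout $P_n := P \times^{\Loop^+ I} \Loop^+_n I$ is an fpqc $\Loop^+_n I$-torsor, hence representable by a smooth affine $R$-scheme.

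The hypothesis enters at each finite level. Over the strictly Henselian local ring $R$, with separably closed residue field $k$, the special fiber of $P_n$ is a torsor under the smooth connected group $\Loop^+_n I \otimes_\C k$ over a separably closed field, so it carries a $k$-point; smoothness of $P_n \to \Spec R$ together with Henselianity then lifts this to $P_n(R)$, so $P_n(R) \neq \emptyset$. The transition maps $P_{n+1}(R) \to P_n(R)$ are surjective, since the kernel of $\Loop^+_{n+1} I \to \Loop^+_n I$ is a vector group $\cong \Ga^{\dim I}$ and the obstruction to lifting a point lies in $H^1(\Spec R, \Ga^{\dim I}) = 0$ ($\Spec R$ being affine). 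Finally $P \cong \varprojlim_n P_n$ as $\Loop^+ I$-torsors — an equivariant map of torsors under the same group is an isomorphism — and this limit is representable by the affine scheme $\Spec \varinjlim_n \mathcal{O}(P_n)$, whence $P(R) = \varprojlim_n P_n(R)$. An inverse limit over $\mathbb{N}$ of nonempty sets with surjective transition maps is nonempty, so $P(R) \neq \emptyset$ and surjectivity follows.

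The main obstacle is the pro-finiteness of $\Loop^+ I$: the clean vanishing ``$H^1 = \ast$ over a strictly Henselian local ring'' is available only level by level, so the crux is to pass from triviality of the finite-type truncations $P_n$ to triviality of $P$ itself, which is exactly what forces the two supporting computations (surjectivity of the transition maps via vanishing $H^1$ of the vector-group kernels, and nonemptiness of the resulting inverse limit). If one is willing to cite from \cite{PR08} that $\pi$ is an \'etale-locally trivial $\Loop^+ I$-torsor, surjectivity becomes quicker — a strictly Henselian local ring admits no nontrivial connected \'etale covers, so any \'etale-local lift of $x$ already descends to an $R$-point — but I would nonetheless record the filtration argument since it is self-contained.
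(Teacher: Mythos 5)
Your proof is correct, but it takes a genuinely different route from the paper on the key point. Both arguments identify the obstruction to lifting $x \in X(R)$ with the (class of the) $\Loop^+ I$-torsor of lifts $\sh{P}(S) = \pi^{-1}(x\vert S)$; the paper then disposes of it by citing \cite{HV11} for the equality $\Ho^1((\Spec R)_{\etale},\Loop^+ I) = \Ho^1((\Spec R)_{\fpqc},\Loop^+ I)$ and observing that the \'etale $\Ho^1$ of a strictly Henselian local ring is trivial, whereas you trivialize the torsor directly through the congruence filtration: push out to the finite-type jet groups $\Loop^+_n I$, produce a point of each $P_n(R)$ by smoothness plus Henselianity (a nonempty smooth scheme over the separably closed residue field has a rational point, which lifts), get surjectivity of $P_{n+1}(R) \to P_n(R)$ from $\Ho^1(\Spec R, \Ga^{\dim I}) = 0$ on the affine $\Spec R$, and conclude by the Mittag--Leffler argument after identifying $\sh{P} \cong \varprojlim_n P_n$. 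This is in effect a self-contained proof of the special case of \cite{HV11} that the lemma needs, at the cost of length; the paper's citation buys brevity and the stronger general comparison of \'etale and fpqc cohomology. You also treat injectivity explicitly (membership in the subsheaf $\Loop^+ I \subseteq \Loop G$ is an fpqc-local condition, using $R((t)) \cap R'[[t]] = R[[t]]$ for a faithfully flat $R \to R'$ and closedness of $B$ in $G$), and correctly note it holds for arbitrary $R$; the paper subsumes this into the ``exact'' sequence of pointed sets without comment. The one step worth writing out more carefully in your version is the identification $\sh{P} \cong \varprojlim_n P_n$: one should check that $\varprojlim_n P_n$ is a pseudo-torsor (which follows from uniqueness of the translating elements at each finite level) before invoking the fact that an equivariant map from a torsor to an inhabited pseudo-torsor is an isomorphism.
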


\begin{proof}
Our reference for the fpqc topology is \cite{GW10}, \cite{Vis08}. 
Let $R$ be any $\C$-algebra. We have an ``exact" sequence of pointed sets
\begin{align*}
0 \rightarrow (\Loop^+ I)(R) \rightarrow (\Loop G)(R) \xrightarrow{\pi} X(R) \xrightarrow{d} \Ho^1((\Spec R)_{\fpqc},\Loop^+ I)\;. 
\end{align*}
Here $\Ho^1((\Spec R)_{\fpqc},\Loop^+I)$ denotes the pointed set of isomorphism classes of 
right torsors over the group scheme $\Loop^+ I$ on $\Spec R$ that become trivial on some fpqc cover of $\Spec R$.
The coboundary map $d$ sends $s \in X(R)$  to the class of the torsor $\sh{P}$ defined
by $\sh{P}(S)=\pi^{-1}(s \vert S)$ for $S \rightarrow \Spec R$ an object of the small fpqc site $(\Spec R)_{\fpqc}$. 
We have a similar set $\Ho^1((\Spec R)_{\etale},\Loop^+ I)$ for the \'etale topology
and an inclusion $\Ho^1((\Spec R)_{\etale},\Loop^+ I)\subseteq \Ho^1((\Spec R)_{\fpqc},\Loop^+ I)$. 
It was proven in \cite{HV11} that this inclusion is an equality (we can replace $\mathbb{F}_q$ by 
$\C$ in loc. cit.). If $R$ is a strictly Henselian local ring, then $\Ho^1((\Spec R)_{\etale},\Loop^+ I)$
is trivial. Indeed, this follows from the fact that any surjective \'etale morphism $S \rightarrow \Spec R$
has a section \cite{Mil80}[I, Theorem 4.2d)]. The statement of the lemma follows. 
\end{proof}

\section{Tangent space to $X$ at the distinguished point}
\subsection{Tangent space to an ind-scheme}\label{ssec:limitoftangentspaces}
Let $(Y_i)_{i \in I}$ be a directed family of schemes over $\C$
together with closed immersions $\iota_{ij}: Y_i \hookrightarrow Y_j$ for $i \leq j$. 
This defines a strict ind-scheme $Y=\varinjlim_{i \in I} Y_i$ over $\C$. Then 
$Y(R) = \varinjlim_{i \in I}Y_i(R)$ holds for any $\C$-algebra $R$. Let us 
furthermore assume that for each $i \in I$ we have a distinguished
point $1 \in Y_i(\C)$ such that $\iota_{ij}(1)=1$. The tangent space $\Tangent_1 Y_i$ is the preimage of $1$
under the map $Y_i(\C[\epsilon]/(\epsilon^2)) \rightarrow Y_i(\C)$ induced by the 
evaluation map $\C[\epsilon]/(\epsilon^2)  \rightarrow \C$ (this is an example of
a fiber product). In the same way we define $\Tangent_1 Y$. 
We have induced embeddings of $\C$-vector spaces $\Tangent_1 Y_i \hookrightarrow \Tangent_1 Y_j$ for $i \leq j$
and $\Tangent_1 Y = \varinjlim_{i \in I}\Tangent_1 Y_i$ holds since $\varinjlim_{i \in I}$
commutes with finite inverse limits.

\subsection{Lie algebra of a group-valued functor}
Let $\sh{G}: \Alg/\C \rightarrow \Gp$ 
be a (covariant) functor. We define $\Lie \sh{G}$ as $\Lie \sh{G} = \ker \ev$, 
where the group homomorphism $\ev: \sh{G}(\C[\epsilon]/(\epsilon^2)) \rightarrow \sh{G}(\C)$
is induced by the evaluation map $\C[\epsilon]/(\epsilon^2) \rightarrow \C$. 
If $\sh{G}$ is the functor of points of a $\C$-scheme $G$, $\Lie \sh{G}$ coincides with the tangent space
of $G$ at the identity, i.e. with the $\C$-vector space $(\mathfrak{m}/\mathfrak{m}^2)^*$, 
where $\mathfrak{m}$ is the maximal ideal corresponding to $1 \in G(\C)$, 
and is known to be a $\C$-Lie algebra. In particular, $\Lie \Loop^+ I$ is a 
$\C$-Lie algebra. More generally, according to \cite{BBE03}[2.10], 
$\Lie \sh{G}$ is naturally endowed with the structure of a $\C$-Lie algebra when $\sh{G}$ commutes with
finite inverse limits. As $\Loop G$ is a group ind-affine ind-scheme, its functor
of points indeed commutes with finite inverse limits, and hence $\Lie \Loop G$ is a $\C$-Lie algebra.  

\subsection{Tangent space}
In this section we return to the affine flag manifold $X$ and abbreviate $\C[\epsilon]/(\epsilon^2) = D$. 
We define the tangent space $\Tangent_1 X$ to $X$ at the distinguished point $1
\in X(\C)$ as in \autoref{ssec:limitoftangentspaces}. 

\begin{Lemma}\label{Lemma:tangentspacetoXthin}
We have a natural identification
$\Tangent_1 X = \mathfrak{n}^-_{\KM}=\bigoplus_{\alpha \in \Phi_{\KM}^-} \mathfrak{g}_{\KM \alpha}$
of $T_{\KM}$-modules. 
\end{Lemma}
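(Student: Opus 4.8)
The plan is to reduce the computation of $\Tangent_1 X$ to the quotient of Lie algebras $\Lie \Loop G / \Lie \Loop^+ I$ and then to identify this quotient with the negative part of the affine root space decomposition. The starting point is \autoref{Lemma:pointsofXthinwithvaluesinashring}: both $\C$ and $D = \C[\epsilon]/(\epsilon^2)$ are strictly Henselian local $\C$-algebras (the latter since it is a local Artinian ring, hence complete, with algebraically closed residue field $\C$), so the lemma gives identifications $X(\C) = G(\C((t)))/I(\C[[t]])$ and $X(D) = G(D((t)))/I(D[[t]])$ compatible with the reduction $\ev\colon D \to \C$, $\epsilon \mapsto 0$. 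Since $\Tangent_1 X$ is by definition the fibre of $X(D) \to X(\C)$ over the distinguished point $1$, I would work directly with these coset descriptions.

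First I would analyse the fibre. Represent a class in it by $g \in G(D((t)))$; its image in $X(\C)$ being the base point means $\bar g \in I(\C[[t]])$. Because $I$ is a smooth affine group scheme over $\C[[t]]$ and $D[[t]] \to \C[[t]]$ is a square-zero extension of $\C[[t]]$-algebras, the reduction $I(D[[t]]) \to I(\C[[t]])$ is surjective by the infinitesimal lifting criterion; multiplying $g$ on the right by a suitable element of $I(D[[t]])$ I may thus arrange $\bar g = 1$, i.e. $g \in \ker(\ev\colon G(D((t))) \to G(\C((t)))) = \Lie \Loop G$. Writing $g = 1 + \epsilon\xi$ with $\xi \in \Lie \Loop G = \Lieg \otimes \C((t))$, two such representatives $1+\epsilon\xi$ and $1+\epsilon\xi'$ give the same class iff $(1+\epsilon\xi)^{-1}(1+\epsilon\xi') = 1 + \epsilon(\xi'-\xi) \in I(D[[t]])$, which holds iff $\xi'-\xi \in \Lie \Loop^+ I$. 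This produces a bijection $(\Lieg \otimes \C((t)))/\Lie \Loop^+ I \xrightarrow{\cong} \Tangent_1 X$, and I would check it is $\C$-linear by noting that addition and scalar multiplication on both sides are induced by the same operations on $\ker \ev$ (addition from the group multiplication, scaling from the $\Gm$-action $\epsilon \mapsto \lambda\epsilon$ on $D$).

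Next I would compute the quotient explicitly. Under the standard identifications $\Lie \Loop G = \Lieg \otimes \C((t))$ and $\Lie \Loop^+ I = \{X \in \Lieg \otimes \C[[t]] : X(0) \in \Lieb\}$ (the latter because $\Loop^+ I$ is the preimage of $B$ under $\Loop^+ G \to G$, so its Lie algebra is the preimage of $\Lieb$ under evaluation at $t=0$), the quotient can be read off power by power: the positive powers of $t$ are killed, the constant term contributes $\Lieg/\Lieb = \Lien^-$, and each negative power $t^{-n}$ with $n \geq 1$ contributes all of $\Lieg$. Hence $\Tangent_1 X \cong \Lien^- \oplus \bigoplus_{n \geq 1} \Lieg \cdot t^{-n}$. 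Comparing with the affine root space decomposition, the constant term $\Lien^- = \bigoplus_{\alpha \in \Phi^-}\Lieg_\alpha$ accounts for the negative real roots at level $0$, while for each $n \geq 1$ the summand $\Lieg \cdot t^{-n} = \bigoplus_{\alpha}\Lieg_\alpha t^{-n} \oplus \Lieh \cdot t^{-n}$ accounts for the real roots $\alpha - n\delta$ together with the imaginary roots $-n\delta$ (of multiplicity $\rk \Lieg$). These are exactly the root spaces indexed by $\Phi^-_{\KM}$, giving $\Tangent_1 X \cong \mathfrak{n}^-_{\KM}$.

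Finally I would address the $T_{\KM}$-equivariance. Using $X = G_{\KM}/I_{\KM}$ with $T_{\KM} = \Gm^{\rot} \times \Gm^{\central} \times T$, the torus acts on each summand $\Lieg_\alpha t^{-n}$ so that $T$ contributes the finite adjoint weight $\alpha$, the loop-rotation factor $\Gm^{\rot}$ records the power of $t$ (the $\delta$-grading), and $\Gm^{\central}$ acts trivially; matching this with the weight of $\mathfrak{g}_{\KM,\,\alpha-n\delta}$ upgrades the identification to one of $T_{\KM}$-modules. The hard part will be twofold: pinning down the identification $\Lie \Loop^+ I = \{X \in \Lieg\otimes\C[[t]] : X(0)\in\Lieb\}$ rigorously, including that the monomorphism $\Loop^+ I \hookrightarrow \Loop G$ induces precisely this inclusion on Lie algebras; and fixing the sign conventions so that the loop-rotation weight of $t^{-n}$ is $-n\delta$ rather than $+n\delta$. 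It is this sign that makes the \emph{negative} nilpotent subalgebra appear, consistent with the convention in which $\Lieg_{-\theta}\cdot t$ spans the simple root space $\alpha_0 = \delta - \theta$.
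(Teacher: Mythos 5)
Your proposal is correct and follows essentially the same route as the paper: both use Lemma \autoref{Lemma:pointsofXthinwithvaluesinashring} for the strictly Henselian rings $\C$ and $D$ to reduce to a coset description, identify the fibre over $1$ with $\ker\ev_{\Loop G}/\ker\ev_{\Loop^+ I}$ (the paper normalizes representatives by $g\mapsto g\,\ev(g)^{-1}$ using the section $\C\hookrightarrow D$, where you invoke the infinitesimal lifting criterion --- a harmless difference), and then identify this quotient with $\mathfrak{n}^-_{\KM}$. The only divergence is that you carry out the final Lie-algebra computation explicitly where the paper cites \cite{PR08}[Proposition 9.3].
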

\begin{proof}
We claim natural identifications
\begin{align*}
\ev^{-1}(1) = \left(\frac{G_{\KM}(D)}{
I_{\KM}(D)} \xrightarrow{\ev} \frac{G_{\KM}(\C)}{I_{\KM}(\C)}\right)^{-1}(1) = 
\frac{\ker \ev_{G_{\KM}}}{\ker \ev_{I_{\KM}}} = \frac{\Lie G_{\KM}}{\Lie I_{\KM}}
\end{align*}
of $T_{\KM}$-modules. The action of $T_{\KM}$ on $\Lie G_{\KM}$ is induced by
the adjoint action of $T_{\KM}$ on $G_{\KM}$ and similarly for $I_{\KM}$.
We have introduced the homomorphisms $\ev_{G_{\KM}}: G_{\KM}(D) \rightarrow G_{\KM}(\C)$ 
and $\ev_{I_{\KM}}: I_{\KM}(D) \rightarrow I_{\KM}(\C)$. In the first identification
we use Lemma \autoref{Lemma:pointsofXthinwithvaluesinashring} for the strictly Henselian local
rings $D$ and $\C$. Let us explain the second identification. 
There is clearly a natural map from the rhs to the lhs. Let
us construct a map from the lhs to the rhs. 
Consider the assignment $g \mapsto g\ev_{G_{\KM}} (g)^{-1}$. This defines
a map $G_{\KM}(D) \rightarrow G_{\KM}(D)$ by
considering $G_{\KM}(\C) \subseteq G_{\KM}(D)$. $\ev_{G_{\KM}}$ restricts to the identity
on this subgroup. Moreover $g \mapsto g \ev_{G_{\KM}}(g)^{-1}$ induces 
a map from the lhs to the rhs as 
$\ev_{G_{\KM}}(g \ev_{G_{\KM}}(g)^{-1})=1$ and for $h \in I_{\KM}(D)$ 
\begin{align*}
g h \ev_{G_{\KM}}(gh)^{-1} &= g h \ev_{G_{\KM}}(h)^{-1}
\ev_{G_{\KM}}(g)^{-1} \\
&= g \ev_{G_{\KM}}(g)^{-1}
\ev_{G_{\KM}}(g) h \ev_{G_{\KM}}(h)^{-1}\ev_{G_{\KM}}(g)^{-1} \equiv g \ev_{G_{\KM}}(g)^{-1} x\;.
\end{align*}
Note that $\ev_{G_{\KM}}(g) \in I_{\KM}(\C)$ and hence
$x \in I_{\KM}(D)$. Also
$\ev_{I_{\KM}}(x)=1$.
The two maps between the lhs and rhs are inverse to each other. Together 
with the description of $\Lie G_{\KM}$ following from the one of $\Lie \Loop G$ given in \cite{PR08}[Proposition 9.3] we conclude
the statement of the lemma. 
\end{proof}

The fact that $\overline{X_w}$ is invariant under the action of $T_{\KM}$ implies that $\Tangent_1 \overline{X_w}
\subseteq \Tangent_1 X$ is a $T_{\KM}$-submodule and hence has a basis of root vectors. 
In the remaining part of the article, we will study  the following two questions.

\begin{Q}\label{Q:rootvectorsintangentspace} Given $w \in W$, which root vectors lie in $\Tangent_1 \overline{X_w}$?
\end{Q}
\begin{Q}\label{Q:existenceofcurve}
Given $w \in W$ and a root vector $v$ in $\Tangent_1 X$, does there exist a projective line contained in $\overline{X_w}$
such that $v$ is tangent to it?
\end{Q}
Concerning the relation between these two questions, it is clear that an affirmative answer to the second question implies
 $v \in \Tangent_1 \overline{X_w}$, while the converse does not hold in general. 

\section{Projective lines associated to real roots}
In \cite{PR08}[section 9.h] it is shown that the $\overline{X_w}$ 
are isomorphic to the Schubert varieties of the Kac-Moody theory as defined in \cite{Ku02}[7.1.13]. 
In this section we review the description of the tangent space
to $\overline{X_w}$ given in \cite{Ku02}[Chapter XII] in order to provide a context for our result presented in \autoref{sec:projlinesassociatedtoimagroots} below. 
According to \cite{Ku02}[12.1.7 Proposition]
the one dimensional $T_{\KM}$-orbit closures in $\overline{X_w}$ containing $1$ are given by the closures
of the images of the root groups ($\cong \Ga$) associated to the elements
of the set 
\begin{align*}
\Phi_w = \{\alpha \in \Phi_{\KM}^{\repart -}\; \vert\; s_{-\alpha} \leq w\}\;. 
\end{align*} 
The orbit closure associated to $\alpha$ is a projective line
contained in $\overline{X_w}$ and containing the $T_{\KM}$-fixed
points $1$ and $\dot{s_{-\alpha}}$. These orbit closures are all distinct. Thus, the answer to Question \autoref{Q:existenceofcurve} is affirmative
in case $v$ is a root vector associated to a root from $\Phi_w$.  
It immediately follows \cite{Ku02}[12.1.10 Corollary] 
\begin{align}\label{eq:realrootscontained}
\Tangent_1 \overline{X_w} \supseteq \bigoplus_{\alpha \in \Phi_w}\mathfrak{g}_{\KM \alpha}\;.
\end{align}
This provides a partial answer to Question \autoref{Q:rootvectorsintangentspace}
in the sense that it describes root vectors contained in $\Tangent_1 \overline{X_w}$.
In general, we have
$\vert \Phi_w\vert \geq \length(w)$ \cite{Ku02}[12.1.8 Corollary]. Since the singular locus of $\overline{X_w}$
is a union of Schubert subvarieties of $\overline{X_w}$, $1$ is a smooth point of $\overline{X_w}$ if and only if
$\overline{X_w}$ is smooth. Moreover, $\dim \overline{X_w} = \length(w)$ implies that
if $\overline{X_w}$ is smooth, then \eqref{eq:realrootscontained} is an equality and $\vert \Phi_w\vert = \length(w)$.
In particular, in this case $\Tangent_1 \overline{X_w}$ has a basis of root vectors associated to \emph{real} roots. 
 In \autoref{ssec:limitoftangentspaces} it was explained that $\Tangent_1 X
= \varinjlim_{w \in W} \Tangent_1 \overline{X_w}$. In conjunction with Lemma \autoref{Lemma:tangentspacetoXthin}
this shows that not all $\overline{X_w}$ can be smooth as $\Phi^-_{\KM}$ contains imaginary roots.
If $1$ is only a \emph{rationally smooth} point of $\overline{X_w}$, see  \cite{Ku02}[12.2.7 Definition], then \eqref{eq:realrootscontained}
is not in general an equality, but it is still known that $\vert \Phi_w\vert = \length(w)$ \cite{Ku02}[12.2.14 Theorem]. 

\section{Projective lines associated to imaginary roots}\label{sec:projlinesassociatedtoimagroots}
The following theorem is the main result of the article. Its proof constructs a projective line 
in the affine flag manifold of $\SL_2$ that is tangent, at the distinguished point, to a root vector associated to an imaginary root.  
The statement of the theorem and its proof should be compared with \cite{Hei10}[Lemma 6].

\begin{Thm}\label{Prop:SL2P1imagroot}
Let $G=\SL_2$ and $n \in \Z_{>0}$. Let $\check{\alpha} \in \check{Q}$
be the positive coroot. Let $v \in \Tangent_1 X$ be a root vector associated to the imaginary root
$-n\delta$ ($v$ is unique up to nonzero scalar). There is a one dimensional $\Gm^{\rot}$-orbit $C$ in $\overline{X_{-3n\check{\alpha}}}$ such
that $\overline{C}\setminus C =  1 \sqcup \dot{-3n\check{\alpha}}$ and $v$ is tangent to $\overline{C}$. 
$\overline{C}$ is isomorphic to a projective line. 
\end{Thm}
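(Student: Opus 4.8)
The plan is to work very explicitly in $\SL_2$, where everything can be written in terms of $2\times 2$ matrices over Laurent series. Let me sketch the construction.

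\emph{Setup and model.} The plan is to specialize everything to $G=\SL_2$ and make \autoref{Lemma:tangentspacetoXthin} completely explicit. Identifying $\Loop G=\SL_2(\C((t)))$ and the Iwahori Lie algebra $\Lie \Loop^+ I$ with the matrices over $\C[[t]]$ whose reduction mod $t$ is upper triangular, the tangent space becomes
\[
\Tangent_1 X \;\cong\; \Liesl_2(\C((t)))/\Lie \Loop^+ I,
\]
and the root vector $v$ attached to the imaginary root $-n\delta$ is the class of $h\,t^{-n}=\smat{t^{-n}}{0}{0}{-t^{-n}}$, a purely diagonal element. Since $\Gm^{\rot}\subseteq I_{\KM}$, the residual $\Gm^{\rot}$-action on $X=\Loop G/\Loop^+ I$ is loop rotation, $g\,\Loop^+ I\mapsto g(st)\,\Loop^+ I$, so a one dimensional $\Gm^{\rot}$-orbit is of the form $s\mapsto \pi(g(st))$ and is determined by the single point $\pi(g)$.

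\emph{The element and its Schubert variety.} First I would exhibit an explicit $g_n\in\SL_2(\C((t)))$, a fine-tuned product of affine root subgroup elements $u_{\pm}(c\,t^{\mp k})$ in which \emph{both} relevant exponents are negative (so that no factor is absorbed into $\Loop^+ I$ on the right), and set $p=\pi(g_n)$, $C=\Gm^{\rot}\cdot p$. Locating $p$ in the affine Bruhat stratification by factoring $g_n\in\Loop^+ I\,\dot w\,\Loop^+ I$ should yield $w=-3n\check\alpha$, read off from the translation part $\smat{t^{-3n}}{0}{0}{t^{3n}}$; as $\length(-3n\check\alpha)=6n$, this places $p$ in the cell $X_{-3n\check\alpha}$, inside the projective variety $\overline{X_{-3n\check\alpha}}$. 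Because $\Gm^{\rot}\subseteq I_{\KM}$ preserves this closed, $I_{\KM}$-invariant subvariety, the whole orbit satisfies $C\subseteq\overline{X_{-3n\check\alpha}}$, and since the $t$-expansion of $g_n$ is non-homogeneous, $p$ is not $\Gm^{\rot}$-fixed, so $C$ is one dimensional.

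\emph{Limits and the tangent (the crux).} Next I would compute the two boundary points. Setting $r=1/s$ and expanding, the Iwahori--Birkhoff factorization (pulling out $\Loop^+ I$ on the right) gives the leading behavior at both ends: as $s\to\infty$ all negative powers of $s$ die and the limit is $1$, while as $s\to 0$ the most singular term dominates and, after reduction, the limit is $\dot{-3n\check\alpha}$; hence $\overline{C}\setminus C = 1\sqcup \dot{-3n\check\alpha}$. \textbf{The main obstacle is the tangent at $1$.} After the right-$\Loop^+ I$ reduction near the $s\to\infty$ end one writes $g_n(t/r)=\nu(r)\,\iota(r)$ with $\iota(r)\in\Loop^+ I$ and $\nu(r)\to 1$; the tangent is then the class of $\tfrac{d}{dr}\nu(0)$, computed rigorously by substituting $r=\epsilon$ into $\nu$ over $D=\C[\epsilon]/(\epsilon^2)$, which is legitimate precisely because $\nu(r)$ carries no negative powers of $r$. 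The difficulty is that a single off-diagonal entry with a negative power of $t$ would contribute a \emph{real}-root tangent, the class of $e\,t^{-k}$ or $f\,t^{-k}$, of lower order in $r$, swamping the diagonal direction — as a naive two-factor product already demonstrates. The coefficients of $g_n$ must therefore be tuned so that these real-root contributions cancel to the relevant order in $r$, leaving the class of $h\,t^{-n}$ as the genuine leading tangent. This cancellation, together with the bookkeeping that forces the surviving translation to be exactly $-3n\check\alpha$ (and hence the factor $3$), is the heart of the argument, and is where the comparison with \cite{Hei10}[Lemma 6] enters.

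\emph{The curve is a projective line.} Finally, $\overline{C}$ is a complete (being closed in the projective $\overline{X_{-3n\check\alpha}}$), irreducible curve carrying a $\Gm^{\rot}$-action, with $C\cong\Gm$ its unique open orbit and exactly two fixed points. The orbit map $\Gm\to C$ extends, by completeness, to a bijective morphism $\Proj^1\to\overline{C}$ carrying $0$ and $\infty$ to the two fixed points; the tangent-weight computations at the two fixed points, producing nonzero opposite $\Gm^{\rot}$-weights $\mp n$, show that this map is an immersion there and hence an isomorphism. Therefore $\overline{C}\cong\Proj^1$, which, with $v$ tangent to $\overline{C}$ at $1$ established in the previous step, completes the proof.
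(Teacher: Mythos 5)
You have correctly identified the right framework (the explicit $\Liesl_2(\C((t)))/\Lie\Loop^+I$ model of the tangent space, the imaginary root vector as the class of $h\,t^{-n}$, the loop-rotation orbit structure, and the final $\Proj^1$ argument via nonvanishing differentials at the two fixed points), and you have even correctly isolated the crux: a generic product of affine root subgroup elements with negative exponents will have a \emph{real}-root class as its leading tangent at $1$, so the coefficients must be tuned to cancel those contributions and leave the diagonal direction $h\,t^{-n}$. But your proposal stops exactly there — you never exhibit the element $g_n$, never verify the cancellation, and never derive the translation $-3n\check{\alpha}$; you explicitly defer "the heart of the argument" to an unperformed computation. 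As written this is a plan with a hole where the theorem's content should be, so it does not constitute a proof.

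For comparison, the paper resolves this crux with a single identity rather than a tuning process: write the torus element $\smat{c}{0}{0}{c^{-1}}$ with $c=1+\epsilon t^{-n}$ via the standard Gauss-type factorization
\begin{align*}
\smat{c}{0}{0}{c^{-1}} = \smat{1}{c}{0}{1}\smat{1}{0}{d}{1}\smat{1}{c}{0}{1}\smat{0}{-1}{1}{0},\qquad cd=-1,
\end{align*}
valid over $D=\C[\epsilon]/(\epsilon^2)$, where the left-hand side is $1+\epsilon h t^{-n}$ and hence represents exactly the imaginary root vector. Then reinterpret $c=1+\epsilon t^{-n}$, $d=-1+\epsilon t^{-n}$ as elements of $\C[\epsilon]((t))$ (no truncation): the right-hand side becomes a genuine morphism $f:\Aff^1\to X$ whose value mod $\epsilon^2$ is the diagonal matrix, so $\diff f(0)$ is automatically the imaginary root vector — the cancellation you were looking for is built in, since all real-root terms in the product are of order $\epsilon^2$. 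The explicit product $\smat{1+\epsilon t^{-n}+\epsilon^2 t^{-2n}+\epsilon^3 t^{-3n}}{-\epsilon^2 t^{-2n}}{\epsilon^2 t^{-2n}}{1-\epsilon t^{-n}}$ then yields, after right multiplication by $\Loop^+I$-elements over $\C[\epsilon,\epsilon^{-1}]$, both the containment in $\overline{X_{-3n\check{\alpha}}}$ and the limit $\overline{f}(\infty)=\dot{-3n\check{\alpha}}$; the factor $3$ is forced by the $\epsilon^3 t^{-3n}$ term. If you want to complete your argument along your own lines, you would need to supply this (or an equivalent) explicit element and carry out the Iwahori reduction at both ends; without it the claims about the boundary points, the Schubert variety, and above all the tangent direction are unsupported.
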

\begin{proof}
\emph{1. Construction of a morphism $f: \Aff^1 \rightarrow \overline{X_w}$.}\\
Let $c=1+\epsilon t^{-n}$. We have the identity in $\SL_2((\C[\epsilon]/(\epsilon^2))((t)))$
\begin{align}\label{eq:expressionfortoruselement}
\begin{pmatrix} c & 0 \\ 0 & c^{-1}\end{pmatrix} = \begin{pmatrix} 1 & c\\ 0 & 1\end{pmatrix}
\begin{pmatrix} 1 & 0 \\ d & 1\end{pmatrix} \begin{pmatrix} 1 & c\\ 0 & 1\end{pmatrix} 
\begin{pmatrix} 0 & -1\\ 1 & 0\end{pmatrix}
\end{align}
with $cd=-1$. I.e. $d=-1+\epsilon t^{-n}$. We can consider the image
of the lhs in $\Tangent_1 X$. Then any root vector associated to the root $-n\delta$ is a nonzero
multiple of it. We claim that if we consider $c,d$ instead as
elements of $\C[\epsilon]((t))$, the rhs of this equation defines a morphism $f: \Aff^1 \rightarrow
\overline{X_w}$, i.e. an element of $\overline{X_w}(\C[\epsilon])$, such that $f(0)=1$
and the differential $\diff f(0)$ at $0$ is a root vector associated to the root $-n \delta$. 
Indeed, the entries of these matrices lie in $\C[\epsilon]((t))$ and hence
the product of the matrices defines points in $\overline{X_w}$ for some $w \in W$ depending on $n$. Furthermore it is clear that
the image of $f$ is the point $1$ together with a one dimensional $\Gm^{\rot}$-orbit $C$, see also 3. below.\\

\emph{2. We have $\overline{f}(\infty) = \begin{pmatrix} t^{-3n} & 0 \\ 0 & t^{3n}\end{pmatrix}1
= \dot{-3n\check{\alpha}}$ and $\overline{C} \subseteq \overline{X_{-3n\check{\alpha}}}$.}\\
Here $\overline{f}: \Proj^1 \rightarrow \overline{X_w}$ is the unique extension of $f$, which exists
because $\overline{X_w}$ is projective. The rhs of \eqref{eq:expressionfortoruselement} 
considered as an element of $\SL_2((t))(\C[\epsilon])$ is
\begin{align*}
\begin{pmatrix} 1 + \epsilon t^{-n} + \epsilon^2 t^{-2n}+\epsilon^3 t^{-3n} & -\epsilon^2 t^{-2n} \\
\epsilon^2 t^{-2n} & 1-\epsilon t^{-n}\end{pmatrix}\;. 
\end{align*}
It is easy to show that
\begin{align}\label{eq:expressionforcd}
\begin{split}
& \begin{pmatrix} 1 + \epsilon t^{-n} + \epsilon^2 t^{-2n}+\epsilon^3 t^{-3n} & -\epsilon^2 t^{-2n} \\
\epsilon^2 t^{-2n} & 1-\epsilon t^{-n}\end{pmatrix} 1
\\
& = \begin{pmatrix} \epsilon^{-2}+\epsilon^{-1}t^{-n}+t^{-2n}+\epsilon t^{-3n} & t^{2n} \\
-t^{-2n} & 0 \end{pmatrix}1\\
& = \begin{pmatrix} \epsilon^{-3}+\epsilon^{-2}t^{-n}+\epsilon^{-1}t^{-2n}+t^{-3n} & -\epsilon^{-2}t^{5n}-\epsilon^{-1}t^{4n}
-t^{3n} \\ -\epsilon^{-1}t^{-2n} & t^{3n}\end{pmatrix}1
\end{split}
\end{align}
holds in $X(\C[\epsilon,\epsilon^{-1}])$
by multiplying with suitable elements in $(\Loop^+ I)(\C[\epsilon,\epsilon^{-1}])$ from the right. From the second line in  \eqref{eq:expressionforcd} we conclude
 $\overline{C} \subseteq \overline{X_w}$ for $w=-3n\check{\alpha}$ using the
explicit description of the Schubert cells in the case of $\SL_2$ (we will not recall this well-known description here). From the third line in \eqref{eq:expressionforcd} we see
\begin{align*}
\overline{f}(\infty) = \begin{pmatrix} t^{-3n} & -t^{3n} \\ 0 & t^{3n}\end{pmatrix}1 =\begin{pmatrix} t^{-3n} & 0 \\ 0 & t^{3n}\end{pmatrix}1\;. 
\end{align*}

\emph{3. $\overline{C}$ is isomorphic to a projective line.}\\
The action of $\Gm^{\rot}$ on $X$ defines an action morphism $ \Gm^{\rot} \rightarrow C$,
$\lambda \mapsto \lambda \cdot f(\epsilon)$, for fixed $\epsilon \neq 0$.
In fact we have $\lambda \cdot f(\epsilon)  = f(\lambda^{-n}\epsilon)$. While this action morphism might not be an isomorphism, it yields 
an isomorphism $\alpha: \Gm \xrightarrow{\cong} C$, which extends
to a morphism $\overline{\alpha}: \Proj^1 \rightarrow \overline{C}$. 
By 2. $\overline{\alpha}$ is bijective. The differential of $\overline{f}$
satisfies $\diff f(0) \neq 0$ and by inspecting the $\epsilon^{-1}$-term in the
third line of \eqref{eq:expressionforcd} we see $\diff \overline{f}(\infty) \neq 0$. Indeed, 
the automorphism of $X$ defined by $\begin{pmatrix} t^{3n} & 0 \\ 0 & t^{-3n}\end{pmatrix}$
induces an isomorphism $\Tangent_{\dot{-3n\check{\alpha}}}X \xrightarrow{\cong} \Tangent_1 X$. 
Now one checks that the image of $\diff \overline{f}(\infty)$ under this isomorphism
is given by  
\begin{align*}
\left[Ê\begin{pmatrix} t^n & 0 \\ -t^{-5n} & -t^n\end{pmatrix}\right] = \left[\begin{pmatrix} 0 & 0 \\ -t^{-5n} & 0\end{pmatrix}\right] \in \Tangent_1 X\;,   
\end{align*}
which is nonzero. This implies
that $\diff \overline{\alpha}$ is everywhere nonzero. By \cite{Har92}[Corollary 14.10] $\overline{\alpha}$ is an isomorphism. 
\end{proof}

For general $G$, we deduce the following theorem. It implies
an affirmative answer to Question \autoref{Q:existenceofcurve} when $w=-3n\check{\alpha}$ and
$v$ is the coroot $\check{\alpha}$ considered as a root vector associated to the imaginary root $-n\delta$. 
\begin{Thm}
Let $n \in \Z_{>0}$. Let $\check{\alpha} \in \mathfrak{h}Ê\cong \mathfrak{g}_{\KM-n \delta}$ be a positive coroot of $G$. 
Then there is $\Gm^{\rot}$-orbit $C$ in $\overline{X_{-3n\check{\alpha}}}$
such that $\overline{C}\setminus C = 1 \sqcup \dot{-3n\check{\alpha}}$
and $\check{\alpha}$ is tangent to $\overline{C}$. $\overline{C}$ is
isomorphic to a projective line. 
\end{Thm}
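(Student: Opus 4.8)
The plan is to deduce the general statement from \autoref{Prop:SL2P1imagroot} by functoriality along the homomorphism $\phi = \phi_\alpha : \SL_2 \to G$ attached to the coroot $\check{\alpha}$. For the positive root $\alpha$ of $G$ whose coroot is $\check{\alpha}$ there is a homomorphism of algebraic groups $\phi : \SL_2 \to G$ whose restriction to the diagonal torus is $\check{\alpha}: \Gm \to T$, which carries the upper (resp. lower) unipotent subgroup to $U_\alpha$ (resp. $U_{-\alpha}$), and whose differential sends $\smat{1}{0}{0}{-1}\mapsto \check{\alpha}\in\mathfrak{h}$ and $\smat{0}{0}{1}{0}\mapsto e_{-\alpha}$. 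Since $\alpha$ is positive we may arrange $\phi(B_{\SL_2})\subseteq B_G$, so $\phi$ carries the Iwahori of $\SL_2$ into that of $G$. Applying the loop functor and passing to the fpqc-sheaf quotients (the central extension plays no role, as $X=\Loop G/\Loop^+ I$) I obtain a morphism $\bar\phi : X_{\SL_2}\to X_G$. It commutes with the loop rotation $\Gm^{\rot}$ because $\phi$ does not involve $t$; it sends the distinguished point to the distinguished point; and because $\Loop^+\phi$ carries $\Loop^+ I_{\SL_2}$ into $\Loop^+ I_G$ and $\bar\phi(\dot w)=\dot{\iota(w)}$ for the induced homomorphism $\iota : W_{\SL_2}\to W_G$ of affine Weyl groups, it carries each $\overline{X_w}^{\SL_2}$ into $\overline{X_{\iota(w)}}^G$.

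Next I would set $\overline{C}=\bar\phi(\overline{C}_{\SL_2})$, where $\overline{C}_{\SL_2}$ is the projective line produced by \autoref{Prop:SL2P1imagroot}; as the image of a single $\Gm^{\rot}$-orbit under an equivariant morphism, $C=\bar\phi(C_{\SL_2})$ is again a $\Gm^{\rot}$-orbit. Because $\iota$ sends the pure translation $-3n\check{\alpha}\in\check{Q}_{\SL_2}$ to the pure translation $-3n\check{\alpha}\in\check{Q}_G$, the containment of Schubert varieties above gives $\overline{C}\subseteq\overline{X_{-3n\check{\alpha}}}^G$. The two $T_{\KM}$-fixed endpoints come from the $\SL_2$ case: $\bar\phi(1)=1$, and $\bar\phi(\dot{-3n\check{\alpha}}^{\SL_2})=\dot{-3n\check{\alpha}}^G$ since $\phi$ sends the representative $\smat{t^{-3n}}{0}{0}{t^{3n}}$ to $\check{\alpha}(t^{-3n})$, which represents $-3n\check{\alpha}$ in $W_G$. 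For the tangent direction, the differential $\diff\bar\phi$ on $\Tangent_1 X=\mathfrak{n}^-_{\KM}$ (Lemma \autoref{Lemma:tangentspacetoXthin}) is $\diff\phi$ applied coefficient-wise, so it sends the $\SL_2$ tangent vector $\smat{1}{0}{0}{-1}\otimes t^{-n}$ to $\check{\alpha}\otimes t^{-n}$, which is precisely $\check{\alpha}$ under the identification $\mathfrak{h}\cong\mathfrak{g}_{\KM-n\delta}$. Hence $\check{\alpha}$ is tangent to $\overline{C}$.

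Finally, to see $\overline{C}\cong\Proj^1$ I would repeat step 3 of \autoref{Prop:SL2P1imagroot}: the $\Gm^{\rot}$-action gives an orbit map extending to $\overline{\alpha}:\Proj^1\to\overline{C}$, which is bijective since $\overline{C}$ has exactly the two distinct fixed points $1,\dot{-3n\check{\alpha}}$, so it suffices to check that $\diff\overline{\alpha}$ is nowhere zero. At $0$ the differential is $\check{\alpha}\neq 0$; at $\infty$, transporting by the automorphism of $X_G$ given by $\check{\alpha}(t^{3n})$ and applying $\diff\phi$ to the $\SL_2$ computation, I obtain the image of $\smat{0}{0}{-t^{-5n}}{0}$, namely $-e_{-\alpha}\otimes t^{-5n}\neq 0$. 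Then \cite{Har92}[Corollary 14.10] yields the isomorphism. The main obstacle I anticipate is bookkeeping rather than conceptual: one must verify carefully that $\bar\phi$ is well defined on the sheaf quotients and genuinely carries Schubert varieties to Schubert varieties with the translation label preserved under $\iota$, and that $\diff\phi$ correctly matches the relevant imaginary and real root vectors of $\SL_2$ with those of $G$ under Lemma \autoref{Lemma:tangentspacetoXthin}. The possible non-injectivity of $\phi_\alpha$ on centers is harmless, since the isomorphy $\overline{C}\cong\Proj^1$ is established through the differential criterion and not through injectivity of $\phi$.
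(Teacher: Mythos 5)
Your proposal is correct and follows the same overall strategy as the paper --- reduce to the $\SL_2$ case of Theorem \autoref{Prop:SL2P1imagroot} via the root homomorphism $\SL_2 \rightarrow G$ attached to $\alpha$, check that it is compatible with Iwahori subgroups, Weyl groups and Schubert varieties, and transport the curve $\overline{C}_{\SL_2}$. The one genuine divergence is in how the final claim $\overline{C} \cong \Proj^1$ is obtained. The paper proves more: using the lemma in the proof of \cite{BD00}[Theorem 4.5.1] together with the affineness of $G/\iota(\SL_2)$, it shows that the induced map $\phi: X^{(\SL_2)} \rightarrow X$ is a \emph{closed embedding} of ind-schemes and that it restricts to a closed embedding $\overline{X_w}^{(\SL_2)} \hookrightarrow \overline{X_{\omega(w)}}$ (with some care about reduced subscheme structures on closures of cells); once this is in place, $\overline{C}$ is literally isomorphic to $\overline{C}_{\SL_2}$ and nothing further needs to be checked. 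You instead only use that $\bar\phi$ is a $\Gm^{\rot}$-equivariant morphism sending cells into cells, and you re-run step 3 of the $\SL_2$ proof inside $X_G$: bijectivity of $\overline{\alpha}$ from the two distinct fixed endpoints, plus nonvanishing of the differential at $0$ and $\infty$ computed by pushing the $\SL_2$ tangent vectors forward under $\diff\phi$ (which is injective, so $\check{\alpha}\otimes t^{-n}$ and $f_\alpha \otimes t^{-5n}$ are indeed nonzero in $\mathfrak{n}^-_{\KM}$), followed by \cite{Har92}[Corollary 14.10]. This buys you independence from the closed-immersion input (and from the affineness of $G/\SL_2$), at the cost of repeating the differential computation; the paper's route is shorter at this point but leans on a nontrivial external lemma. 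Two small points you should make explicit if you write this up: (i) that $C$ is one-dimensional (it cannot collapse to a point, since then $\overline{C}$ would be a point and the tangent vector $\check{\alpha}\otimes t^{-n}$ would vanish), and (ii) that the containment $\bar\phi(\overline{X_w}^{(\SL_2)}) \subseteq \overline{X_{\iota(w)}}$ follows by combining $\bar\phi(X_w^{(\SL_2)}) \subseteq X_{\iota(w)}$ with properness of $\overline{X_w}^{(\SL_2)}$, so that the image of the closure is closed and contained in the closure of the image of the cell. Neither is a gap, just bookkeeping of the kind you already anticipated.
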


\begin{proof}
Let $\alpha$ be a positive root of $\mathfrak{g}$. We have an
embedding of Lie algebras $\mathfrak{sl}_2 \hookrightarrow \mathfrak{g}$
that sends $e \mapsto e_{\alpha}$ and $f \mapsto f_{\alpha}$,
where $e_{\alpha}$ and $f_{\alpha}$ is the standard generator in $\mathfrak{g}_{\alpha}$ and $\mathfrak{g}_{-\alpha}$ respectively.
This map is the differential at the identity of a unique closed embedding $\iota: \SL_2 \hookrightarrow G$
of algebraic groups. 
Let us indicate that an object is constructed for $\SL_2$ instead of $G$ by a superscript $^{(\SL_2)}$. 
The map $\iota$ induces an injective group homomorphism $\omega: W^{(\SL_2)} \hookrightarrow W$,
by the definition of the Weyl group given in \autoref{sec:Weylgroup}. $\omega$
embeds the coroot lattice of $\SL_2$ into the one for $G$ by sending the positive coroot of $\SL_2$ to $\check{\alpha}$.
Moreover $\omega$ embeds the finite Weyl group of $\SL_2$ 
into the one for $G$ by sending the reflection to $s_{\alpha} = e^{e_{\alpha}}e^{-f_{\alpha}}e^{e_{\alpha}} \in N/T$.
We claim that $\iota$ induces a closed embedding 
of Schubert varieties $\overline{X_w}^{(\SL_2)} \hookrightarrow \overline{X_{\omega(w)}}$
for each $w \in W^{(\SL_2)}$. To prove this, we first note that $\iota$
induces a closed embedding of ind-schemes $\phi: X^{(\SL_2)} \hookrightarrow X$.
This follows from \cite{BD00}[lemma in the proof of Theorem 4.5.1] and the fact that the quotient
$G/\iota(\SL_2)$ is affine. Let $S$ be a quasi-compact locally closed subscheme
of $X^{(\SL_2)}$. Then $S$ is a locally closed subscheme of 
$\overline{X_w}^{(\SL_2)}$ for some $w \in W^{(\SL_2)}$.
Furthermore $\phi$ induces an isomorphism between the reduced subscheme 
structure on the Zariski closure of $S$ in $X^{(\SL_2)}$ and the reduced subscheme
structure on the Zariski closure of $\phi(S)$ in $X$. 
We take $S=X_w^{(\SL_2)}$, a Schubert cell, and note that
the restriction of $\phi$ to $S$
defines a closed embedding $S \rightarrow X_{\omega(w)}$. Hence 
the reduced subscheme defined on the closure of $\phi(S)$ in $X$
is a closed subvariety of the Schubert variety $\overline{X_{\omega(w)}}$. It follows 
that $\phi$ induces a closed embedding $\overline{X_w}^{(\SL_2)} \hookrightarrow \overline{X_{\omega(w)}}$.
Theorem \autoref{Prop:SL2P1imagroot} now implies the statement. 
\end{proof}

\bibliographystyle{alpha}
\bibliography{references}

\end{document}